\theoremstyle{plain}
\newtheorem{theorem}{Theorem}[section]
\newtheorem{lemma}{Lemma}[section]
\newtheorem{corollary}{Corollary}[section]
\newtheorem{definition}{Definition}[section]
\theoremstyle{definition}
\newtheorem{example}{Example}[section]
\newtheorem{remark}{Remark}[section]
\newcommand{\keywords}{\textbf{Key words. }\medskip}
\newcommand{\subjclass}{\textbf{MSC 2020. }\medskip}
\renewcommand{\abstract}{\textbf{Abstract. }\medskip}
\numberwithin{equation}{section}
\newcommand{\RR}{\mathbb{R}}
\newcommand{\NN}{\mathbb{N}}
\begin{document}

\title{Fixed point theorems for weak, partial, Bianchini and Chatterjea-Bianchini contractions in semimetric spaces with triangle functions}

\author{Ravindra K. Bisht and Evgen O. Petrov}

\date{}

\maketitle

\begin{abstract}
This paper advances a line of research in fixed point theory initiated by M. Bessenyei and Z. P\'ales, building on their introduction of the triangle function concept in [J. Nonlinear Convex Anal, Vol 18 (3), 515-524 (2017)]. By applying this concept, the study revises several well-known fixed point theorems in metric spaces, extending their applicability to semimetric spaces with triangle functions. The paper focuses on general theorems involving weak, partial, Bianchini and Chatterjea-Bianchini contractions,  deriving corollaries relevant to metric spaces, $b$-metric spaces, ultrametric spaces, and distance spaces with power triangle functions. Notably, several new and interesting findings emerge in the context of weak and partial contractions.
\end{abstract}

\subjclass{Primary 47H10; Secondary 47H09}

\keywords{fixed point theorem, semimetric space, triangle function, $b$-metric space, ultrametric space, power triangle inequality}


\section{Introduction}

Fixed point theorems, in general, provide a sturdy framework for understanding and tackling solutions to both linear and nonlinear problems encountered in biological, engineering, and physical sciences.

The concept of successive approximations for solving differential and integral equations and in approximation theory has roots in the works of Chebyshev, Picard, Caccioppoli, and others. However, in 1922, Banach~\cite{Ba22} was the first to correctly formulate the contraction mapping principle, known as the Banach Contraction Principle, in an abstract form suitable for diverse applications.

In 1968, Kannan's fixed point theorem proved a significant result independent of the Banach Contraction Principle \cite{Ka68}. This breakthrough spurred the development of numerous contractive definitions, many of which permitted discontinuities within their domains while characterizing metric completeness. Notable among these contractive conditions are those investigated by Chatterjee \cite{Ch72}, Ćirić, Reich, and Rus \cite{CB71, Re71, Ru71}, Zamfirescu \cite{Z72}, and Hardy-Rogers \cite{HR73}. These conditions share similar characteristics, thereby enriching the understanding of contractive mappings in metric spaces. For a comprehensive overview of various contractive definitions, we refer authors to the survey paper by Rhoades \cite{Rh77}.

Over more than a century, researchers worldwide have maintained a strong interest in fixed point theorems. This is evidenced by the publication of numerous articles and monographs in recent decades dedicated to fixed point theory and its applications. For comprehensive surveys of fixed point results and their diverse applications, refer to the monographs
~\cite{KK2001,AJS18,Su18,AKOR15,GK90}.

Let $X$ be a nonempty set. A mapping $d\colon X\times X\to \mathbb{R}^+$, where $\mathbb{R}^+=[0,\infty)$, is defined as a \emph{metric} if it satisfies the following axioms for all $x,y,z \in X$:
\begin{itemize}
  \item [(i)] $(d(x,y)=0)\Leftrightarrow (x=y)$ (non-negativity condition),
  \item [(ii)] $d(x,y)=d(y,x)$ (symmetric condition),
  \item [(iii)] $d(x,y)\leqslant d(x,z)+d(z,y)$ (triangle inequality).
\end{itemize}

The combination of a set $X$ with a distance-measuring function $d$ is termed a ``metric space''. If $d$ satisfies the first two axioms, namely (i) and (ii), it earns the label ``semimetric''. Consequently, when $d$ qualifies as a semimetric on $X$, the pair $(X,d)$ is referred to as a ``semimetric space''. These spaces were initially explored by Fr\'{e}chet in~\cite{Fr06}, where he referred to them as ``classes (E)''. Subsequently, they garnered attention from researchers, as evidenced by works such as~\cite{Ch17,Ni27,Wi31,Fr37,DP13}. In semimetric spaces, one can introduce the concepts of convergent and Cauchy sequences, as well as completeness, following the standard procedures.

The concept of $b-$metric spaces, originally introduced by Bakhtin~\cite{B89} under the name of quasi-metric spaces.  Czerwik later expanded on this concept by applying it to generalizations of Banach's fixed point theorem \cite{C98,C93}. In a $b-$metric space, the traditional triangle inequality is modified to include a constant \( K \geq 1 \), ensuring that for all \( x, y, z \in X \), the inequality
\[
d(x, y) \leq K [d(x, z) + d(z, y)]
\]
holds. Fagin and Stockmeyer \cite{FKS03} delved deeper into the relaxed form of the triangle inequality, terming it nonlinear elastic matching (NEM). They highlighted its diverse applications, including trademark shape analysis  \cite{CMV94}  and the measurement of ice floes  \cite{Mc91}. For further details one may see \cite{FS98},\cite{X09}.

An \emph{ultrametric} is a special type of metric that satisfies the strong triangle inequality:
$$
d(x, y) \leqslant \max \{d(x, z), d(z, y)\}.
$$
When this condition is met, the pair $(X, d)$ is known as an \emph{ultrametric space}. It is worth noting that Hausdorff formulated the ultrametric inequality in 1934, and  Krasner introduced the concept of ultrametric spaces in 1944\cite{Kr44}.

In 2017, Bessenyei and  P\'ales~\cite{BP17} extended the Matkowski fixed point theorem~\cite{Ma75} by introducing a definition of a triangle function $\Phi \colon \overline{\mathbb{R}}_{+}^2\to \overline{\mathbb{R}}^+$ for a semimetric $d$.  In this paper, we adopt this definition in a slightly different form, restricting the domain and the range of $\Phi$ by ${\RR}_+^2$ and ${\RR}^+$, respectively.

\begin{definition}\label{d21}
Consider a semimetric space $(X, d)$. We say that $\Phi \colon {\RR}^+\times{\RR}^+ \to {\RR}^+$ is a \emph{triangle function} for $d$ if $\Phi$ is symmetric and non-decreasing in both of its arguments, satisfies $\Phi(0,0)=0$ and, for all $x, y, z \in X$, the generalized triangle inequality
\begin{equation}\label{tr}
d(x,y)\leqslant \Phi(d(x,z), d(z,y))
\end{equation}
holds.
\end{definition}

Clearly, metric spaces, ultrametric spaces, and $b$-metric spaces are all examples of semimetric spaces, each with a specific triangle function: $\Phi(u,v)=u+v$ for metric spaces, $\Phi(u,v)=\max\{u,v\}$ for ultrametric spaces, and $\Phi(u,v)=K(u+v)$, $K\geqslant 1$ for $b$-metric spaces.

In~\cite{BP17}, the study focused on regular semimetric spaces with basic triangle functions that are continuous at the origin. These spaces were shown to be Hausdorff, with unique limits for convergent sequences, and to satisfy the Cauchy property. For further developments on semimetric spaces with generalized triangle inequalities, see~\cite{VH17,CJT18,BP22,KP22,PS22}. 

In~\cite{PSB24} generalizations of Banach, Kannan, Chatterjea, \'Ciri\'c-Reich-Rus fixed point theorems, as well as of the fixed point theorem for mappings contracting perimeters of triangles were proved via considering corresponding mappings in semimetric spaces with triangle functions.

In this paper, we extend the study of fixed point theorems by modifying assumptions to broaden their applicability within semimetric spaces using triangle functions. Our findings offer new insights, applicable not only to metric spaces but also to broader contexts, such as $b$-metric spaces, ultrametric spaces, and distance spaces with power triangle functions.

\section{Main results}

The following lemma and theorem establish necessary conditions imposed on triangle function ensuring that the semimetric is continuous.
\begin{lemma}\label{lq1}
Let $(a_n)$ and $(b_n)$ be sequences such that $a_n, b_n \geqslant 0$, $a_n, b_n \to 0$ as $n \to \infty$, and let $(c_n)$ be any sequence with $c_n \geqslant 0$. Suppose the following conditions hold for the triangle function $\Phi$:
\begin{equation}\label{q1}
l \leqslant \Phi(b_n, \Phi(a_n, c_n)),
\end{equation}
\begin{equation}\label{q2}
c_n \leqslant \Phi(a_n, \Phi(l, b_n)),
\end{equation}
where $l > 0$ is a positive real number, and
\begin{equation}\label{q3}
|\Phi(x_n, y_n) - y_n| \to 0 \text{ as } n \to \infty,
\end{equation}
for every sequence $(x_n)$ such that $x_n \to 0$ as $n \to \infty$ and every arbitrary sequence $(y_n)$ with $x_n, y_n \geqslant 0$. Then $c_n \to l$ as $n \to \infty$.
\end{lemma}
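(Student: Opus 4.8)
The plan is to sandwich $(c_n)$ between $l$ from below, using \eqref{q1}, and $l$ from above, using \eqref{q2}, with \eqref{q3} serving as the workhorse throughout. I read \eqref{q3} as the statement that inserting into one slot of $\Phi$ a quantity tending to $0$ does not change the value asymptotically: $\Phi(x_n,y_n)=y_n+o(1)$ whenever $x_n\to 0$, for an \emph{arbitrary} non-negative companion sequence $(y_n)$. Although \eqref{q3} is phrased with the vanishing argument in the first slot, symmetry of $\Phi$ lets me place the vanishing argument in whichever slot is convenient; monotonicity of $\Phi$ will not be needed.

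For the upper estimate I start from \eqref{q2}, $c_n\leqslant\Phi(a_n,\Phi(l,b_n))$. Applying \eqref{q3} with $x_n:=b_n\to 0$ and the \emph{constant} sequence $y_n:=l$, and using symmetry, gives $\Phi(l,b_n)=\Phi(b_n,l)\to l$. Applying \eqref{q3} once more with $x_n:=a_n\to 0$ and $y_n:=\Phi(l,b_n)$ (a legitimate choice since $\Phi$ is $\RR^{+}$-valued) gives $\Phi(a_n,\Phi(l,b_n))-\Phi(l,b_n)\to 0$, hence $\Phi(a_n,\Phi(l,b_n))\to l$; passing to the limsup in \eqref{q2} yields $\limsup_n c_n\leqslant l$. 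For the lower estimate I start from \eqref{q1}, $l\leqslant\Phi(b_n,\Phi(a_n,c_n))$, and peel off the two vanishing arguments one at a time via \eqref{q3}: first with $x_n:=a_n$, $y_n:=c_n$ to get $\Phi(a_n,c_n)=c_n+\delta_n$, $\delta_n\to 0$; then with $x_n:=b_n$, $y_n:=\Phi(a_n,c_n)$ to get $\Phi(b_n,\Phi(a_n,c_n))=\Phi(a_n,c_n)+\varepsilon_n=c_n+\delta_n+\varepsilon_n$, $\varepsilon_n\to 0$. Then \eqref{q1} gives $c_n\geqslant l-(\delta_n+\varepsilon_n)$, so $\liminf_n c_n\geqslant l$. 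Combining the two bounds, $l\leqslant\liminf_n c_n\leqslant\limsup_n c_n\leqslant l$, i.e. $c_n\to l$.

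I do not anticipate a real obstacle. The only points needing care are that the ``arbitrary'' sequence in \eqref{q3} may legitimately be taken to be a composite such as $\Phi(a_n,c_n)$ (fine, since $\Phi$ takes values in $\RR^{+}$), and the use of symmetry of $\Phi$ to move the vanishing argument into the desired slot. Note also that \eqref{q1} and \eqref{q2} are each used only once, with no iteration, so no a priori boundedness of $(c_n)$ is required --- the limsup bound extracted from \eqref{q2} supplies it for free.
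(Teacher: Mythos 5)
Your proof is correct and follows essentially the same route as the paper: both apply condition~(\ref{q3}) twice to each nested expression, obtaining $\Phi(b_n,\Phi(a_n,c_n))=c_n+o(1)$ and $\Phi(a_n,\Phi(l,b_n))\to l$, with the symmetry of $\Phi$ used to handle $\Phi(l,b_n)$. The only difference is organizational --- you sandwich $c_n$ directly via $\liminf$/$\limsup$, whereas the paper packages the lower bound as a short contradiction argument --- so the content is the same.
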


\begin{proof}
Let $s_n = \Phi(b_n, \Phi(a_n, c_n))$ and $t_n = \Phi(a_n, \Phi(l, b_n))$. By condition~(\ref{q3}), we have

\begin{equation}\label{e56}
\begin{split}
|s_n - c_n|=|\Phi(b_n, \Phi(a_n, c_n))-\Phi(a_n,c_n)+\Phi(a_n,c_n)-c_n|
\\
\leqslant
|\Phi(b_n, \Phi(a_n, c_n))-\Phi(a_n,c_n)|+|\Phi(a_n,c_n)-c_n| \to 0
\end{split}
\end{equation}
 and
$$
|t_n - l| = |\Phi(a_n, \Phi(l, b_n))-\Phi(l, b_n)+\Phi(l, b_n)-l|
$$
$$
\leqslant |\Phi(a_n, \Phi(l, b_n))-\Phi(l, b_n)|+|\Phi(l, b_n)-l|
\to 0 \text{ as } n \to \infty.
$$
Suppose $c_n \nrightarrow l$. Then by~(\ref{e56}) $s_n \nrightarrow l$. Since $s_n \geqslant l$ for every $n$ and $s_n \nrightarrow l$, it follows from~(\ref{e56}) that
$$
\limsup_{n \to \infty} s_n = \limsup_{n \to \infty} c_n > l.
$$
Given that $t_n \to l$, condition~(\ref{q2}) implies that $\limsup_{n \to \infty} c_n \leqslant l$, leading to a contradiction.
\end{proof}

\begin{theorem}\label{t13}
Let $(X,d)$ be a semimetric space with the triangle function $\Phi$ satisfying condition~(\ref{q3}). Then the semimetric $d$ is continuous.
\end{theorem}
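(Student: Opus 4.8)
The plan is to establish that $d$ is sequentially continuous: if $x_n \to x$ and $y_n \to y$ in $(X,d)$, that is $d(x_n,x)\to 0$ and $d(y_n,y)\to 0$, then $d(x_n,y_n)\to d(x,y)$. I would set $a_n = d(x_n,x)$, $b_n = d(y_n,y)$, $c_n = d(x_n,y_n)$ and $l = d(x,y)$; then $a_n,b_n,c_n\geqslant 0$ and $a_n,b_n\to 0$, and the goal becomes $c_n\to l$. The natural idea is to recognize the hypotheses of Lemma~\ref{lq1} inside these data, but since that lemma requires $l>0$, the case $l=0$ must be handled by hand first.

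Suppose first $l=0$, i.e.\ $x=y$. By symmetry $d(x,y_n)=d(y,y_n)=b_n$, and the generalized triangle inequality~(\ref{tr}) gives $c_n=d(x_n,y_n)\leqslant \Phi(d(x_n,x),d(x,y_n))=\Phi(a_n,b_n)$. Applying~(\ref{q3}) with the null sequence $(a_n)$ in the first slot and $(b_n)$ in the second yields $|\Phi(a_n,b_n)-b_n|\to 0$, hence $\Phi(a_n,b_n)\to 0$ because $b_n\to 0$; thus $c_n\to 0=l$.

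For the main case $l>0$ the strategy is to verify conditions~(\ref{q1}) and~(\ref{q2}) of Lemma~\ref{lq1} and then invoke it. For~(\ref{q1}), apply~(\ref{tr}) twice: $l=d(x,y)\leqslant \Phi(d(x,y_n),d(y_n,y))$ and $d(x,y_n)\leqslant \Phi(d(x,x_n),d(x_n,y_n))=\Phi(a_n,c_n)$; since $\Phi$ is symmetric and non-decreasing in each argument, $l\leqslant \Phi(\Phi(a_n,c_n),b_n)=\Phi(b_n,\Phi(a_n,c_n))$, which is~(\ref{q1}). For~(\ref{q2}), apply~(\ref{tr}) twice again: $c_n=d(x_n,y_n)\leqslant \Phi(d(x_n,x),d(x,y_n))=\Phi(a_n,d(x,y_n))$ and $d(x,y_n)\leqslant \Phi(d(x,y),d(y,y_n))=\Phi(l,b_n)$, so monotonicity gives $c_n\leqslant \Phi(a_n,\Phi(l,b_n))$, which is~(\ref{q2}). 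Since $\Phi$ satisfies~(\ref{q3}) by hypothesis and $l>0$, Lemma~\ref{lq1} yields $c_n\to l$, completing the proof.

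I expect the only real subtleties to be bookkeeping: choosing the correct intermediate points ($x_n$, then $x$; or $y_n$, then $y$) when splitting distances so that the nested expressions match the precise shapes in~(\ref{q1}) and~(\ref{q2}), using symmetry of $d$ to identify terms such as $d(x,x_n)=a_n$ and $d(y,y_n)=b_n$, and remembering to dispatch $l=0$ separately because Lemma~\ref{lq1} is stated only for $l>0$. No genuine analytic obstacle is anticipated beyond these points.
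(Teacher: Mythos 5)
Your proof is correct and follows essentially the same route as the paper: define $a_n=d(x_n,x)$, $b_n=d(y_n,y)$, $c_n=d(x_n,y_n)$, $l=d(x,y)$, verify~(\ref{q1}) and~(\ref{q2}) from the generalized triangle inequality, and invoke Lemma~\ref{lq1}. Your explicit verification of the two conditions and your separate treatment of the case $l=0$ (which Lemma~\ref{lq1} does not cover, and which the paper's proof passes over in silence) are welcome extra care, not a different method.
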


\begin{proof}
Let $x, y \in X$ and let $(x_n)$ and $(y_n)$ be sequences in $X$ such that $x_n \to x$ and $y_n \to y$ as $n \to \infty$. Define $a_n = d(x_n, x)$, $b_n = d(y_n, y)$, $c_n = d(x_n, y_n)$, and $l = d(x, y)$. Clearly, conditions~(\ref{q1}) and~(\ref{q2}) are satisfied. By applying Lemma~\ref{lq1}, we obtain $d(x_n, y_n) \to d(x, y)$, which is the desired result.
\end{proof}

\begin{example}\label{ex1}
Let $\Phi(u, v) = (u^q + v^q)^{\frac{1}{q}}$, where $q > 0$. Consider any sequence $(x_n)$ such that $x_n \to 0$ as $n \to \infty$ and an arbitrary sequence $(y_n)$ with $x_n, y_n \geqslant 0$. We have
$$
\lim_{n \to \infty} \left|(x_n^q + y_n^q)^{\frac{1}{q}} - y_n\right| = 0.
$$
Thus, by Theorem~\ref{t13}, the semimetric $d$ is continuous when $\Phi(u, v) = (u^q + v^q)^{\frac{1}{q}}$, $q > 0$.
\end{example}

\begin{remark}\label{r25}
Let $(X, d)$ be a semimetric space with the power triangle function $\Phi(u, v) = (u^q + v^q)^{\frac{1}{q}}$, where $q > 0$. If $q \geqslant 1$, then $(X, d)$ is a metric space. Indeed, by~(\ref{tr}), it is sufficient to note that for such $q$, the inequality $(u^q + v^q)^{\frac{1}{q}} \leqslant u + v$ holds, as shown, e.g., in Section 2.12 of~\cite{HLP52}. If $0 < q < 1$, then $(X, d)$ is not necessarily a metric space. For instance, let $X = \{x, y, z\}$ with $d(x, y) = d(x, z) = 1$ and $d(y, z) = 3$. In this case, $(X, d)$ is clearly not a metric space. A simple verification of the triangle inequalities for $q = \frac{1}{2}$ can demonstrate this.

Distance spaces with power triangle functions were considered in~\cite{Gr16}.
\end{remark}

The following lemma, which we need below, was proved in~\cite{PSB24}. For the convenience of the reader, we give it with the proof.

\begin{lemma}\label{lem}
Let $(X,d)$ be a semimetric space with the triangle function $\Phi$
satisfying the following conditions:
\begin{itemize}
\item[1)] The equality
\begin{equation}\label{ee1}
  \Phi(ku,kv) = k\Phi(u,v)
\end{equation}
holds for all $k,u,v\in \RR^+$.
\item[2)] For every $0\leqslant \alpha <1$ there exists $C(\alpha)>0$ such that for every
$p\in \NN^+$ the inequality
\begin{equation}\label{ee2}
  \Phi(1,\Phi(\alpha, \Phi(\alpha^2,....,\Phi(\alpha^{p-1},\alpha^{p}))))\leqslant C(\alpha)
\end{equation}
holds.
\end{itemize}

Let $(x_n)$, $n=0,1,\ldots$, be a sequence in $X$ having the property that there exists $\alpha\in [0,1)$ such that
\begin{equation}\label{e31n0}
d(x_n,x_{n+1})\leqslant \alpha d(x_{n-1},x_{n})
\end{equation}
for all $n\geqslant 1$. Then $(x_n)$ is a Cauchy sequence.
\end{lemma}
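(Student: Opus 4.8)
The plan is to first iterate the contraction hypothesis~\eqref{e31n0} to obtain the geometric estimate $d(x_n,x_{n+1})\leqslant \alpha^n d(x_0,x_1)$ for all $n\geqslant 0$, and then to control $d(x_n,x_m)$ for $m>n$ by repeatedly unfolding the generalized triangle inequality~\eqref{tr} along the chain $x_n,x_{n+1},\dots,x_m$. If $d(x_0,x_1)=0$, then by the semimetric axiom $x_0=x_1$, and~\eqref{e31n0} forces the sequence to be constant, so there is nothing to prove; hence I would assume $D:=d(x_0,x_1)>0$.

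Applying~\eqref{tr} with $z=x_{n+1}$ and iterating on the remaining pair gives
$$d(x_n,x_m)\leqslant \Phi\bigl(d(x_n,x_{n+1}),\Phi\bigl(d(x_{n+1},x_{n+2}),\dots,\Phi(d(x_{m-2},x_{m-1}),d(x_{m-1},x_m))\dots\bigr)\bigr).$$
Since $\Phi$ is non-decreasing in both arguments, I would replace each entry $d(x_{n+j},x_{n+j+1})$ by its upper bound $\alpha^{n+j}D$, working from the innermost pair outward so that at each step only one argument of a single $\Phi$ changes. Then, pulling the common factor $\alpha^n D$ out of every argument by means of the homogeneity relation~\eqref{ee1}, the right-hand side becomes
$$\alpha^n D\cdot\Phi\bigl(1,\Phi(\alpha,\Phi(\alpha^2,\dots,\Phi(\alpha^{p-1},\alpha^p)\dots))\bigr),\qquad p=m-n-1.$$
Condition~\eqref{ee2} bounds the nested $\Phi$-expression by $C(\alpha)$, so $d(x_n,x_m)\leqslant C(\alpha)\,\alpha^n D$ for $m\geqslant n+2$, while the case $m=n+1$ is covered directly by the geometric estimate. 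As $\alpha\in[0,1)$, this bound tends to $0$ as $n\to\infty$ uniformly in $m$, which is precisely the Cauchy property.

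I expect the only delicate point to be the bookkeeping in the middle step: one must check that the iterated monotonicity substitution is legitimate (each replacement alters a single argument of $\Phi$, the others being held fixed) and that, after extracting $\alpha^n D$ through~\eqref{ee1}, the exponents collapse exactly to the pattern $1,\alpha,\alpha^2,\dots,\alpha^{p}$ appearing in~\eqref{ee2}, with the right value $p=m-n-1$. The iteration of~\eqref{e31n0}, the degenerate case $D=0$, and the final passage to the limit are routine.
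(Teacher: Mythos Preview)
Your proposal is correct and follows essentially the same route as the paper: iterate~\eqref{e31n0} to the geometric bound $d(x_n,x_{n+1})\leqslant\alpha^n d(x_0,x_1)$, unfold~\eqref{tr} along the chain $x_n,\dots,x_m$, use monotonicity of $\Phi$ to replace each distance by its geometric bound, factor out $\alpha^n d(x_0,x_1)$ via~\eqref{ee1}, and invoke~\eqref{ee2}. The separate treatment of the case $D=0$ is harmless but unnecessary, since the final estimate $d(x_n,x_m)\leqslant \alpha^n C(\alpha)D$ already gives $0$ in that case without any division by $D$.
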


\begin{proof}
By~(\ref{e31n0}) we have
$$d(x_1,x_2)\leqslant \alpha d(x_0,x_1), \,\,
d(x_2,x_3)\leqslant \alpha d(x_1,x_2), \,\,
d(x_3,x_4)\leqslant \alpha d(x_2,x_3), \,\,\ldots.$$
Hence, we obtain
\begin{equation}\label{e31n}
d(x_n,x_{n+1})\leqslant \alpha^n d(x_0,x_1).
\end{equation}

Applying consecutively the generalized triangle inequality~(\ref{tr}) to the points $x_n$, $x_{n+1}$, $x_{n+2}$, \ldots ,$x_{n+p}$, where $p\in \NN^+$, $p\geqslant 2$, we obtain
$$
d(x_n,\,x_{n+p})\leqslant \Phi(d(x_{n},\,x_{n+1}), d(x_{n+1},\,x_{n+p}))
$$
$$
\leqslant \Phi(d(x_{n},\,x_{n+1}),  \Phi(d(x_{n+1},\,x_{n+2}), d(x_{n+2},\,x_{n+p})))
$$
$$
\ldots
$$
\begin{equation*}
\leqslant \Phi(d(x_{n},\,x_{n+1}),\Phi(d(x_{n+1},\,x_{n+2}),\ldots, \Phi(d(x_{n+p-2},\,x_{n+p-1}), d(x_{n+p-1},\,x_{n+p}))))
\end{equation*}
(by the monotonicity of $\Phi$ and by inequalities~(\ref{e31n}) we have)
$$
\leqslant \Phi(\alpha^{n}d(x_0,x_1),\Phi(\alpha^{n+1}d(x_0,x_1),\cdots,\Phi(\alpha^{n+p-2}d(x_0,x_1),\alpha^{n+p-1}d(x_0,x_1)))) \leqslant
$$
(applying several times equality~(\ref{ee1}), we obtain)
$$
\leqslant
\alpha^{n}\Phi(1,\Phi(\alpha,\cdots,\Phi(\alpha^{p-2},\alpha^{p-1})))d(x_0,x_1).
$$
By condition~(\ref{ee2}) we obtain
\begin{equation*}
d(x_n,\,x_{n+p})\leqslant \alpha^{n}C(\alpha)d(x_0,x_1).
\end{equation*}
Since $0\leqslant\alpha<1$, we have $d(x_n,\,x_{n+p})\to 0$ as $n\to \infty$ for every $p\geqslant 2$. If $p=1$, then the relation $d(x_n,\,x_{n+1})\to 0$ follows from~(\ref{e31n}). Thus, $(x_n)$ is a Cauchy sequence, which completes the proof.
\end{proof}

\begin{remark}[\!\!\cite{PSB24}]
Let $(X,d)$ be a complete semimetric space. Then the sequence $(x_n)$ has a limit $x^*$. If additionally the semimetric $d$ is continuous, then we get
 $d(x_n,x_{n+p}) \to d(x_n,x^*)$ as $p \to \infty$. Also,  letting $p\to \infty$,  we get
\begin{equation}\label{ee5}
d(x_n,\,x^*)\leqslant \alpha^{n}C(\alpha)d(x_0,x_1).
\end{equation}
\end{remark}


\subsection{Partial contractions in semimetric spaces}

In \cite{N24}, the concept of partial contractivity was introduced in the context of a $b$-metric space. Let $X$ be a $b-$metric space. A self-map $T : X \to X$ is a partial contractivity on a $b-$metric space $(X,d)$ if there exist $\alpha, \beta \in \mathbb{R}$ with $0 \leq \alpha < 1$ and $\beta \geq 0$ such that for any $x, y \in X$,
\begin{equation} \label{pc}
d(Tx, Ty) \leq \alpha d(x, y) + \beta d(x, Tx).
 \end{equation}
Due to the symmetric property in a metric space, $T$ also satisfies the following dual condition:
\begin{equation} \label{pc1}
d(Tx, Ty) \leq \alpha d(x, y) + \beta d(y, Ty).
\end{equation}

If $(X,d)$ is a metric space and  $T:X \to X$ satisfies either a Banach contraction, Kannan contraction, Chatterjea contraction, Zamfirescu contraction, Hardy-Rogers contraction, or \'Ciri\'c-Reich-Rus contraction, then $T$ is a partial contraction \cite{N24}.

\begin{remark}
It is pertinent to mention here that fixed point theorems in a metric space $(X,d)$, satisfying a combination of conditions (\ref{pc}) and (\ref{pc1}) were initiated by Proinov in \cite{P06}.
\end{remark}

We begin with the following fixed point theorem:

\begin{theorem}\label{t2}
Let $(X,d)$ be a complete semimetric space with the triangle function $\Phi$ continuous at $(0,0)$ and satisfying conditions~(\ref{ee1}) and~(\ref{ee2}).
Let $T \colon X \to X$ satisfy~(\ref{pc}) with $0 \leq \alpha + \beta < 1$. Then $T$ has a unique fixed point.
\end{theorem}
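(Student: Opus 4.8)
The plan is to run the Picard iteration from an arbitrary starting point and then use the two structural assumptions on $\Phi$ separately: the homogeneity~(\ref{ee1}) together with the bounded-iteration bound~(\ref{ee2}), via Lemma~\ref{lem}, to produce a Cauchy sequence, and the continuity of $\Phi$ at $(0,0)$ to identify its limit as a fixed point.

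First I would fix $x_0\in X$ and put $x_{n+1}=Tx_n$. Applying~(\ref{pc}) with $x=x_n$, $y=x_{n+1}$ gives
\begin{equation*}
d(x_{n+1},x_{n+2})=d(Tx_n,Tx_{n+1})\leqslant \alpha\,d(x_n,x_{n+1})+\beta\,d(x_n,Tx_n)=(\alpha+\beta)\,d(x_n,x_{n+1}),
\end{equation*}
so~(\ref{e31n0}) holds with ratio $\alpha+\beta\in[0,1)$. Lemma~\ref{lem} then shows $(x_n)$ is Cauchy, and completeness gives $x^*\in X$ with $d(x_n,x^*)\to 0$; in particular $d(x_{n+1},x^*)\to 0$ as well.

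Next I would show $Tx^*=x^*$. By the generalized triangle inequality~(\ref{tr}) applied through $x_{n+1}$,
\begin{equation*}
d(x^*,Tx^*)\leqslant \Phi\bigl(d(x^*,x_{n+1}),\,d(x_{n+1},Tx^*)\bigr),
\end{equation*}
and applying~(\ref{pc}) once more with $x=x_n$, $y=x^*$,
\begin{equation*}
d(x_{n+1},Tx^*)=d(Tx_n,Tx^*)\leqslant \alpha\,d(x_n,x^*)+\beta\,d(x_n,x_{n+1})\longrightarrow 0.
\end{equation*}
Since also $d(x^*,x_{n+1})\to 0$, and $\Phi$ is non-decreasing with $\Phi(0,0)=0$ and continuous at $(0,0)$, letting $n\to\infty$ yields $d(x^*,Tx^*)=0$, i.e. $x^*=Tx^*$. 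For uniqueness, if $x^*$ and $y^*$ are both fixed points, then~(\ref{pc}) with $x=x^*$, $y=y^*$ gives $d(x^*,y^*)\leqslant \alpha\,d(x^*,y^*)+\beta\,d(x^*,Tx^*)=\alpha\,d(x^*,y^*)$, and since $0\leqslant\alpha\leqslant\alpha+\beta<1$ this forces $d(x^*,y^*)=0$.

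The iteration estimate and the uniqueness computation are routine; the delicate point is the identification step, since $T$ is not assumed continuous and one cannot simply pass to the limit in $Tx_n$. What rescues it is that the partial-contraction inequality~(\ref{pc}), read as $d(Tx_n,Tx^*)\leqslant \alpha\,d(x_n,x^*)+\beta\,d(x_n,Tx_n)$, forces $d(x_{n+1},Tx^*)\to 0$ for free, after which continuity of $\Phi$ at the origin closes the argument. I expect this step, together with keeping track of which constant ($\alpha+\beta$ versus $\alpha$) is needed where, to be the only mild obstacles.
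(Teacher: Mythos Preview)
Your proof is correct and follows essentially the same route as the paper: Picard iteration with ratio $\alpha+\beta$ feeds Lemma~\ref{lem}, completeness yields the limit $x^*$, and the identification $Tx^*=x^*$ is obtained by sandwiching $d(x^*,Tx^*)$ via~(\ref{tr}) through $x_{n+1}$ and using~(\ref{pc}) plus continuity of $\Phi$ at the origin; uniqueness comes from $\alpha<1$. Your write-up is in fact slightly more careful than the paper's in distinguishing the inequality in~(\ref{pc}) from an equality and in isolating that only continuity at $(0,0)$ is invoked.
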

\begin{proof}
Let $x_0\in X$. Define $x_n = Tx_{n-1}$, i.e, $x_n=T^n x_0$ for $n = 1, 2, \ldots$. Then, in view of (\ref{pc}), it follows straightforwardly that

\begin{align*}
    d(x_{n},x_{n+{1}})=d(Tx_{n-1},Tx_{n}) &\leqslant  \alpha d(x_{n-1}, x_{n}) + \beta d(x_{n-1},Tx_{n-1})  \\
               &=  (\alpha+\beta) d(x_{n-1}, x_{n}).
\end{align*}
Hence,
$$
d(x_{n},x_{n+{1}}) \leqslant \delta d(x_{n-1},x_{n}),
$$
where $\delta = \alpha+\beta$, $0\leqslant \delta <1$.

By Lemma~\ref{lem}, $(x_n)$ is a Cauchy sequence and  by the completeness of $(X,d)$, this sequence has a limit $x^*\in X$. Let us prove that $Tx^*=x^*$.  Using the generalized triangle inequality~(\ref{tr}), the continuity of $\Phi$ and (\ref{pc}), we have

\begin{align*}
d(x^*, Tx^*) &\leqslant \Phi\left(d(x^*, T^n x_0), d(T^n x_0, Tx^*)\right) \\
&= \Phi\left(d(x^*, T^n x_0), \alpha d(T^{n-1} x_0, x^*) + \beta d(T^{n-1} x_0, T^n x_0)\right) \to 0.
\end{align*}
This implies that $x^*$ is a fixed point.

Suppose that there exist two distinct fixed points $x$ and $y$. Then $Tx=x$ and $Ty=y$, which contradicts~(\ref{pc}) since $\alpha<1$.
\end{proof}

In~\cite{PSB24}, it was proved that the triangle functions
$\Phi(u,v)=u+v$, $\Phi(u,v)=\max\{u,v\}$, $\Phi(u,v)=(u^q+v^q)^{\frac{1}{q}}$, $q>0$, satisfy condition~(\ref{ee1}) and condition~(\ref{ee2}) with
$C(\alpha)$ being $1/(1-\alpha)$, $1$ and $1/(1-\alpha^q)^{\frac{1}{q}}$, respectively. Clearly, all these triangle functions are continuous at $(0,0)$. Hence, we have the following.
\begin{corollary}\label{c23}
Theorem~\ref{t2} holds for semimetric spaces with all triangle functions mentioned above. Moreover, the following estimations hold:
  $$
  d(x_n,x^*)\leqslant \frac{\delta^n}{1-\delta}d(x_0,x_1) \text{ if } \Phi(u,v)=u+v,
  $$

  $$
  d(x_n,x^*)\leqslant \delta^n d(x_0,x_1)  \text{ if }    \Phi(u,v)=\max\{u,v\},
  $$

  $$
  d(x_n,x^*)\leqslant \frac{\delta^n}{(1-\delta^q)^{\frac{1}{q}}}d(x_0,x_1) \text{ if }   \Phi(u,v)=(u^q+v^q)^{\frac{1}{q}}, q>0,
  $$
where $\delta=\alpha+\beta$.
\end{corollary}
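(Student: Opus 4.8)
The plan is to read the corollary off Theorem~\ref{t2} together with the estimate~(\ref{ee5}) from the remark following Lemma~\ref{lem}, once the three named triangle functions are matched against the hypotheses of those statements --- which, as the paragraph preceding the corollary already records, is exactly the content of the cited computations in~\cite{PSB24}.

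First I would note that each of $\Phi(u,v)=u+v$, $\Phi(u,v)=\max\{u,v\}$ and $\Phi(u,v)=(u^q+v^q)^{1/q}$ with $q>0$ is symmetric, non-decreasing in each argument, vanishes at $(0,0)$, is continuous at $(0,0)$, and (by~\cite{PSB24}) satisfies~(\ref{ee1}) together with~(\ref{ee2}), the constant $C(\alpha)$ in~(\ref{ee2}) being $1/(1-\alpha)$, $1$ and $1/(1-\alpha^q)^{1/q}$ respectively. Hence for any one of these three functions the hypotheses of Theorem~\ref{t2} are satisfied, and Theorem~\ref{t2} immediately gives that $T$ possesses a unique fixed point $x^*$, obtained as $\lim_n T^n x_0$ for an arbitrary $x_0\in X$.

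For the error bounds I would argue as in the proof of Theorem~\ref{t2}: the Picard iterates $x_n=T^n x_0$ satisfy $d(x_n,x_{n+1})\leqslant \delta\, d(x_{n-1},x_n)$ with $\delta=\alpha+\beta\in[0,1)$, and the computation inside the proof of Lemma~\ref{lem}, carried out with contraction constant $\delta$, yields $d(x_n,x_{n+p})\leqslant \delta^{n}C(\delta)\,d(x_0,x_1)$ for every $p$. To let $p\to\infty$ and obtain~(\ref{ee5}) I need the continuity of the semimetric $d$, which in each case follows from Theorem~\ref{t13}: for the power function condition~(\ref{q3}) is precisely Example~\ref{ex1} (and $q=1$ recovers $\Phi(u,v)=u+v$), while for $\Phi(u,v)=\max\{u,v\}$ one has $0\leqslant \max\{x_n,y_n\}-y_n\leqslant x_n$, which tends to $0$ whenever $x_n\to0$, so~(\ref{q3}) holds there too. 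Then~(\ref{ee5}) becomes $d(x_n,x^*)\leqslant \delta^{n}C(\delta)\,d(x_0,x_1)$, and inserting the three values of $C(\delta)$ produces exactly the three displayed inequalities.

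I do not expect a genuine obstacle here: the corollary is essentially a specialization of Theorem~\ref{t2} followed by substitution of the known constants. The only step that is a short verification rather than a citation is the continuity of $d$ for the ultrametric triangle function $\Phi(u,v)=\max\{u,v\}$, which is settled by the elementary squeeze displayed above.
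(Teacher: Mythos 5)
Your proposal is correct and follows essentially the same route as the paper: specialize Theorem~\ref{t2} using the constants $C(\alpha)$ from~\cite{PSB24}, then obtain the error bounds from~(\ref{ee5}), which requires the continuity of $d$ in each case. The only (harmless) difference is that the paper simply cites continuity of $d$ as well-known for $\Phi(u,v)=u+v$ and $\Phi(u,v)=\max\{u,v\}$, whereas you verify it via Theorem~\ref{t13} (with the squeeze $0\leqslant\max\{x_n,y_n\}-y_n\leqslant x_n$), which is a fine, slightly more self-contained justification.
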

\begin{proof}
To apply~(\ref{ee5}), we need to establish the continuity of \( d \) for every triangle function. The continuity of \( d \) is well-known for the cases where \(\Phi(u, v) = u + v\) and \(\Phi(u, v) = \max\{u, v\}\). The continuity of \( d \) when \(\Phi(u, v) = (u^q + v^q)^{\frac{1}{q}}\), with \(q > 0\), follows from Theorem~\ref{t13} (see also Example~\ref{ex1}).
\end{proof}

\begin{corollary}\label{c36}
Theorem~\ref{t2} holds for $b$-metric spaces {with the coefficient $K$ } if $\delta K<1$, where $\delta=\alpha+\beta$.
\end{corollary}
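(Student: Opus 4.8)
The plan is to realize a $b$-metric space with coefficient $K$ as a semimetric space with triangle function $\Phi(u,v)=K(u+v)$, $K\geqslant 1$, and then to rerun the argument of Theorem~\ref{t2}, checking that every hypothesis actually used there remains available under the weaker assumption $\delta K<1$. First I would record the easy facts: $\Phi(u,v)=K(u+v)$ is continuous at $(0,0)$, and it satisfies the homogeneity condition~(\ref{ee1}) because $\Phi(ku,kv)=K(ku+kv)=k\Phi(u,v)$. The delicate point is condition~(\ref{ee2}): when $K>1$ it fails for $\alpha$ close to $1$, so Theorem~\ref{t2} cannot be quoted verbatim. However, its proof uses~(\ref{ee2}) only through Lemma~\ref{lem}, applied to a sequence satisfying $d(x_n,x_{n+1})\leqslant\delta\, d(x_{n-1},x_n)$ with $\delta=\alpha+\beta$; and the proof of Lemma~\ref{lem} in turn uses~(\ref{ee2}) only for that single value $\delta$. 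Hence it suffices to check~(\ref{ee2}) at $\alpha=\delta$.

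The main step is then the bookkeeping computation for the nested expression. For $\Phi(u,v)=K(u+v)$ and the geometric sequence $1,\delta,\delta^2,\dots,\delta^p$, a short induction on $p$ gives the identity
$$
\Phi\bigl(1,\Phi(\delta,\Phi(\delta^2,\dots,\Phi(\delta^{p-1},\delta^p)))\bigr)=K\sum_{i=0}^{p-1}(K\delta)^i+(K\delta)^p .
$$
If $\delta K<1$, the right-hand side is bounded, uniformly in $p$, by $C(\delta):=\frac{K}{1-K\delta}+1$, so~(\ref{ee2}) holds with this constant. Moreover $K\geqslant 1$ together with $\delta K<1$ forces $\delta<1$, so Lemma~\ref{lem} applies to the Picard iteration $x_n=T^nx_0$ and yields that $(x_n)$ is a Cauchy sequence.

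From here the remainder of the proof of Theorem~\ref{t2} is unchanged: completeness produces a limit $x^*$; continuity of $\Phi$ at $(0,0)$ together with~(\ref{pc}) gives $d(x^*,Tx^*)\leqslant\Phi\bigl(d(x^*,T^nx_0),\,\alpha d(T^{n-1}x_0,x^*)+\beta d(T^{n-1}x_0,T^nx_0)\bigr)\to 0$, whence $Tx^*=x^*$; and $\alpha<1$ excludes a second fixed point. I expect the only real obstacle to be getting the iterated-$\Phi$ identity right and observing that $\delta K<1$ is precisely what makes the ensuing geometric series converge — which is exactly why the hypothesis of the corollary cannot be dropped and why the full strength of Theorem~\ref{t2} is not at our disposal.
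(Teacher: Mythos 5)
Your proposal is correct and follows essentially the same route as the paper: it verifies~(\ref{ee1}) and continuity at the origin for $\Phi(u,v)=K(u+v)$, observes that Lemma~\ref{lem} only needs~(\ref{ee2}) at the single value $\alpha=\delta$, and bounds the nested expression by a geometric series with ratio $K\delta<1$. The only differences are cosmetic (your exact identity and constant $\frac{K}{1-K\delta}+1$ versus the paper's upper bound $\frac{K}{1-K\delta}$), so no further changes are needed.
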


\begin{proof}
It is clear that $\Phi(u,v)=K(u+v)$ satisfies condition~(\ref{ee1}) and it is continuous at $(0,0)$. One can see from the proof that Lemma~\ref{lem} holds even in a weaker form: it is sufficient that condition~(\ref{ee2}) is satisfied with $\alpha$ from inequality~(\ref{e31n0}) but not necessarily for all $0\leqslant \alpha <1$. In this connection, consider expression~(\ref{ee2}) (with $\alpha=\delta$) for the function $\Phi$:
\begin{equation}\label{e519}
K+K^2\delta+K^3\delta^{2}+\cdots+K^p\delta^{p-1}+K^{p}\delta^{p}
\end{equation}
$$
\leqslant K+K^2\delta+K^3\delta^{2}+\cdots+K^p\delta^{p-1}+K^{p+1}\delta^{p}.
$$
It is clear that this sum consists of $p+1$ terms of geometric progression with the common ratio $\delta K$ and the start value $K$. According to the formula for the sum of infinite geometric series sum~(\ref{e519}) is less than $K/(1-\delta K)=C(\delta)$ for every finite $p\in \NN^+$, which establishes inequality~(\ref{ee2}).
\end{proof}

\begin{remark}
Note that in the case $\Phi(u,v)=K(u+v)$ estimation~(\ref{ee5}) cannot be established since in general case $d$ is not continuous in $b$-metric spaces with $K>1$, see~\cite{LHD19}.
\end{remark}

\begin{remark} It is interesting to note that despite the fact that conditions~(\ref{pc}) and~(\ref{pc1}) are equivalent replacing condition~(\ref{pc}) by condition~(\ref{pc1}) in Theorem~\ref{t2} requires an additional condition $\Phi(0, v) < 1$ for all $0 \leq v <1$. Indeed, let $x_0\in X$. Define $x_n = Tx_{n-1}$, i.e, $x_n=T^n x_0$ for $n = 1, 2, \ldots$. Similarly, in view of (\ref{pc1}), we get
\begin{align*}
    d(x_{n},x_{n+{1}})=d(Tx_{n-1},Tx_{n}) &\leqslant  \alpha d(x_{n-1}, x_{n}) + \beta d(x_{n},Tx_{n})  \\
               &=  \alpha d(x_{n-1}, x_{n}) +\beta d(x_{n}, x_{n+1}).
\end{align*}
Hence,
$
d(x_{n},x_{n+{1}}) \leqslant \delta d(x_{n-1},x_{n}),
$
where $\delta = \frac{\alpha}{1-\beta}$, $0\leqslant \delta <1$.

Also, by the generalized triangle inequality~(\ref{tr}), the monotonicity of $\Phi$ and (\ref{pc1}), we get
$$
d(x^*,Tx^*)\leqslant \Phi(d(x^*,T^nx_{0}),d(T^nx_{0},Tx^*))
$$
$$
\leqslant \Phi(d(x^*,T^nx_{0}),\alpha d(T^{n-1}x_{0}, x^*)+ \beta d(x^*,Tx^*)).
$$
Letting $n\to \infty$, by the continuity of $\Phi$, we obtain
$
d(x^*,Tx^*)\leqslant \Phi(0,\beta d(x^*,Tx^*)).
$
Using~(\ref{ee1}), we have
$
d(x^*,Tx^*)\leqslant d(x^*,Tx^*)\Phi(0,\beta).
$
By $\Phi(0,\beta)<1$, we get $d(x^*,Tx^*)=0$, i.e. $x^*=Tx^*$.
\end{remark}

\subsection{Weak contractions in semimetric spaces}

In 2004, the concept of a weak contraction \cite{B04} was introduced by Berinde within the framework of a metric space. Let $(X,d)$ be a metric space. A self-map $T\colon X \to X$ is a weak contraction on $X$ if there exist $\alpha, \delta \in \mathbb{R}$ with $0 \leq \alpha < 1$ and $\delta \geq 0$ such that for any $x, y \in X$,
\begin{equation} \label{wc}
d(Tx, Ty) \leq \alpha d(x, y) + \delta d(x, Ty).
 \end{equation}
Because of the symmetry property in a metric space,
$T$ also satisfies the following dual condition:
\begin{equation} \label{wc1}
d(Tx, Ty) \leq \alpha d(x, y) + \delta d(y, Tx).
\end{equation}
If $(X,d)$ is a metric space and $T \colon X \to X$ is either a Banach contraction, Kannan contraction, Chatterjea contraction or Zamfirescu contraction, then $T$ is a weak contraction \cite{B04}.\\

We now proceed to prove the following result:

\begin{theorem}\label{wt2}
Let $(X,d)$ be a complete semimetric space with the triangle function $\Phi$ continuous at $(0,0)$ and satisfying conditions~(\ref{ee1}) and~(\ref{ee2}).
Let $T \colon X \to X$ satisfy~(\ref{wc1}). Then $T$ has a fixed point. If $\delta=0$, then the fixed point is unique.
\end{theorem}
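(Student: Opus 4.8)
The plan is to imitate the Picard-iteration argument used for Theorem~\ref{t2}, exploiting the fact that condition~(\ref{wc1}), once evaluated along the orbit of $T$, collapses to a Banach-type estimate on consecutive iterates.

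First I would fix $x_0 \in X$ and set $x_n = Tx_{n-1} = T^n x_0$. Applying~(\ref{wc1}) to the pair $(x_{n-1}, x_n)$ gives
$$
d(x_n, x_{n+1}) = d(Tx_{n-1}, Tx_n) \leqslant \alpha\, d(x_{n-1}, x_n) + \delta\, d(x_n, Tx_{n-1}) = \alpha\, d(x_{n-1}, x_n),
$$
since $Tx_{n-1} = x_n$ and $d(x_n, x_n) = 0$ by axiom~(i). Thus~(\ref{e31n0}) holds with the constant $\alpha \in [0,1)$, and Lemma~\ref{lem} — whose hypotheses~(\ref{ee1}) and~(\ref{ee2}) are assumed — yields that $(x_n)$ is Cauchy; completeness of $(X,d)$ then produces a limit $x^* \in X$.

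Next I would verify that $Tx^* = x^*$. Using the generalized triangle inequality~(\ref{tr}) with the intermediate point $x_n$,
$$
d(x^*, Tx^*) \leqslant \Phi\bigl(d(x^*, x_n),\, d(x_n, Tx^*)\bigr),
$$
and~(\ref{wc1}) applied to $(x_{n-1}, x^*)$ gives $d(x_n, Tx^*) = d(Tx_{n-1}, Tx^*) \leqslant \alpha\, d(x_{n-1}, x^*) + \delta\, d(x^*, x_n)$, which tends to $0$ as $n \to \infty$ because $x_n \to x^*$. Since also $d(x^*, x_n) \to 0$, continuity of $\Phi$ at $(0,0)$ forces $\Phi\bigl(d(x^*, x_n),\, d(x_n, Tx^*)\bigr) \to \Phi(0,0) = 0$, hence $d(x^*, Tx^*) = 0$ and $x^*$ is a fixed point.

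Finally, for uniqueness when $\delta = 0$, condition~(\ref{wc1}) reduces to $d(Tx, Ty) \leqslant \alpha\, d(x,y)$, so if $x$ and $y$ are fixed points then $d(x,y) = d(Tx, Ty) \leqslant \alpha\, d(x,y)$ with $\alpha < 1$, which forces $d(x,y) = 0$, i.e.\ $x = y$. I do not expect a serious obstacle: the argument is a transparent adaptation of the proof of Theorem~\ref{t2}, the only points needing care being the cancellation $d(x_n, x_n) = 0$ and the observation that for $\delta > 0$ uniqueness cannot be asserted — the orbit argument still delivers a fixed point, but~(\ref{wc1}) no longer rules out distinct ones.
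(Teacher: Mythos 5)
Your proposal is correct and follows essentially the same route as the paper: Picard iteration with~(\ref{wc1}) collapsing to $d(x_n,x_{n+1})\leqslant \alpha\, d(x_{n-1},x_n)$, Lemma~\ref{lem} plus completeness to get the limit $x^*$, the generalized triangle inequality with continuity of $\Phi$ at $(0,0)$ to show $d(x^*,Tx^*)=0$, and the reduction to a Banach-type estimate for uniqueness when $\delta=0$. No gaps.
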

\begin{proof}
Let $x_0\in X$. Define $x_n = Tx_{n-1}$, i.e, $x_n=T^n x_0$ for $n = 1, 2, \ldots$. Then, in view of (\ref{wc1}), it follows
\begin{align*}
    d(x_{n},x_{n+{1}})=d(Tx_{n-1},Tx_{n}) &\leqslant  \alpha d(x_{n-1}, x_{n}) + \delta d(x_{n},Tx_{n-1})  \\
               &= \alpha d(x_{n-1}, x_{n}).
\end{align*}
Hence,
$$
d(x_{n},x_{n+{1}}) \leqslant \alpha d(x_{n-1},x_{n}),
$$
where $0 \leq \alpha <1$. By Lemma~\ref{lem}, $(x_n)$ is a Cauchy sequence and  by the completeness of $(X,d)$, this sequence has a limit $x^*\in X$. Let us prove that $Tx^*=x^*$. By the generalized triangle inequality~(\ref{tr}), the continuity of $\Phi$ and (\ref{wc1}), we have
\begin{align*}
d(x^*, Tx^*) &\leqslant \Phi(d(x^*, T^n x_0), d(T^n x_0, Tx^*)) \\
&= \Phi\left(d(x^*, T^n x_0), \alpha d(T^{n-1} x_0, x^*) + \delta d(x^*, T^n x_0)\right) \to 0,
\end{align*}
as $n \to \infty$, which means that $x^*$ is a fixed point. Let $\delta=0$. Suppose that there exist two distinct fixed points $x$ and $y$. Then $Tx=x$ and $Ty=y$, which contradicts~(\ref{wc1}), since $\alpha<1$.
\end{proof}

\begin{corollary}\label{c334}
Theorem~\ref{wt2} holds for semimetric spaces with the following triangle functions: $\Phi(u,v)=u+v$; $\Phi(u,v)=\max\{u,v\}$; $\Phi(u,v)=(u^q+v^q)^{\frac{1}{q}}$, $q>0$, and with the corresponding estimations from above for $d(x_n,x^*)$, see Corollary~\ref{c23}.
\end{corollary}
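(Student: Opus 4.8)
The plan is to imitate the proof of Corollary~\ref{c23} essentially line for line. First I would recall that in~\cite{PSB24} it was already verified that the three triangle functions $\Phi(u,v)=u+v$, $\Phi(u,v)=\max\{u,v\}$ and $\Phi(u,v)=(u^q+v^q)^{1/q}$, $q>0$, satisfy conditions~(\ref{ee1}) and~(\ref{ee2}), with $C(\alpha)$ equal to $1/(1-\alpha)$, $1$ and $1/(1-\alpha^q)^{1/q}$ respectively, and that each of them is manifestly continuous at $(0,0)$. Hence all hypotheses of Theorem~\ref{wt2} are satisfied, so $T$ possesses a fixed point $x^*$ (and it is unique when $\delta=0$). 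This disposes of the existence part with no extra work.

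Next I would address the error bounds. The point to keep track of is the value of the contraction constant: in the iteration $x_n=T^nx_0$ built in the proof of Theorem~\ref{wt2} one has $d(x_n,x_{n+1})\leqslant \alpha\, d(x_{n-1},x_n)$, i.e.\ inequality~(\ref{e31n0}) holds with $\alpha$ in the role played by $\delta=\alpha+\beta$ in Corollary~\ref{c23}. Consequently, once the continuity of $d$ is known, estimate~(\ref{ee5}) applies directly and gives
$$
d(x_n,x^*)\leqslant \alpha^{n}C(\alpha)\,d(x_0,x_1),
$$
which, after substituting the three values of $C(\alpha)$ listed above, yields precisely the bounds of Corollary~\ref{c23} with $\delta$ replaced by $\alpha$.

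Finally I would check the continuity of $d$ for each $\Phi$. For $\Phi(u,v)=u+v$ and $\Phi(u,v)=\max\{u,v\}$ this is classical, and for $\Phi(u,v)=(u^q+v^q)^{1/q}$, $q>0$, it follows from Theorem~\ref{t13} in combination with Example~\ref{ex1}, where condition~(\ref{q3}) was shown to hold for this $\Phi$.

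I do not anticipate a real obstacle: the statement is a straightforward specialization of Theorem~\ref{wt2}, and every required ingredient --- verification of~(\ref{ee1})--(\ref{ee2}), continuity of $\Phi$ at the origin, and continuity of $d$ --- is already available either from~\cite{PSB24} or from Theorem~\ref{t13}/Example~\ref{ex1}. The only item demanding a little care is the bookkeeping noted above, namely that the relevant constant in~(\ref{e31n0}) for weak contractions is $\alpha$, not $\alpha+\beta$, so that the estimates are inherited from Corollary~\ref{c23} with this replacement.
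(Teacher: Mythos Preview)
Your proposal is correct and matches the paper's approach exactly: the paper gives no separate proof of this corollary but simply refers back to Corollary~\ref{c23}, and your argument reproduces that proof verbatim with the one piece of bookkeeping you correctly flag, namely that the contraction constant in~(\ref{e31n0}) coming from the proof of Theorem~\ref{wt2} is $\alpha$ rather than $\alpha+\beta$.
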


\begin{corollary}\label{c48}
Theorem~\ref{wt2} holds for $b$-metric spaces {with the coefficient $K$ } if $\alpha K<1$.
\end{corollary}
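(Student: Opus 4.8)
The plan is to follow the template of the proof of Corollary~\ref{c36}. First I would observe that the $b$-metric triangle function $\Phi(u,v)=K(u+v)$, $K\geqslant 1$, is continuous everywhere (in particular at $(0,0)$) and satisfies the homogeneity condition~(\ref{ee1}), since $\Phi(ku,kv)=K(ku+kv)=k\Phi(u,v)$. What fails in general is condition~(\ref{ee2}) for \emph{all} $\alpha\in[0,1)$; however, as already noted in the proof of Corollary~\ref{c36}, Lemma~\ref{lem} — and hence Theorem~\ref{wt2} — remains valid in the weaker form in which~(\ref{ee2}) is only required to hold for the particular value of $\alpha$ occurring in~(\ref{e31n0}). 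In the proof of Theorem~\ref{wt2} the Picard iterates satisfy $d(x_n,x_{n+1})\leqslant\alpha\,d(x_{n-1},x_n)$ with $\alpha$ the coefficient from~(\ref{wc1}), so it suffices to check~(\ref{ee2}) for exactly this $\alpha$.

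Next I would evaluate the nested expression in~(\ref{ee2}) for $\Phi(u,v)=K(u+v)$ with this $\alpha$. Expanding it gives
\[
\Phi\bigl(1,\Phi(\alpha,\ldots,\Phi(\alpha^{p-1},\alpha^{p}))\bigr)
= K+K^2\alpha+K^3\alpha^2+\cdots+K^{p}\alpha^{p-1}+K^{p}\alpha^{p},
\]
which is at most $K+K^2\alpha+\cdots+K^{p+1}\alpha^{p}$, a sum of $p+1$ terms of a geometric progression with first term $K$ and common ratio $\alpha K$. Under the hypothesis $\alpha K<1$ this partial sum is bounded, for every $p\in\NN^+$, by $K/(1-\alpha K)$; thus~(\ref{ee2}) holds with $C(\alpha)=K/(1-\alpha K)$.

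With these verifications in hand, all the ingredients needed to run the argument of Theorem~\ref{wt2} are present: $\Phi$ is continuous at $(0,0)$, homogeneous in the sense of~(\ref{ee1}), and the weaker version of~(\ref{ee2}) holds for the relevant $\alpha$. Consequently the Picard sequence $(x_n)$ is Cauchy, converges by completeness to some $x^*$, and the continuity of $\Phi$ at the origin forces $d(x^*,Tx^*)=0$ (here one uses $d(x^*,T^nx_0)\to 0$ together with $\alpha\,d(T^{n-1}x_0,x^*)+\delta\,d(x^*,T^nx_0)\to 0$); when $\delta=0$ uniqueness follows from~(\ref{wc1}) exactly as in the proof of Theorem~\ref{wt2}. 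I expect the only point requiring genuine care to be the reduction to the weaker form of Lemma~\ref{lem} and the bookkeeping in the geometric-sum estimate; the rest is a direct transcription of the proofs of Theorem~\ref{wt2} and Corollary~\ref{c36}.
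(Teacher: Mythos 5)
Your proposal is correct and follows essentially the same route as the paper: the paper's proof of this corollary simply refers to the proof of Corollary~\ref{c36}, which consists of exactly the steps you carry out — noting that $\Phi(u,v)=K(u+v)$ satisfies~(\ref{ee1}) and is continuous at $(0,0)$, invoking the weaker form of Lemma~\ref{lem} where~(\ref{ee2}) is needed only for the specific contraction coefficient, and bounding the nested expression by the geometric sum $K/(1-\alpha K)$ under $\alpha K<1$. Your expansion of the nested $\Phi$-expression and the resulting constant $C(\alpha)=K/(1-\alpha K)$ agree with the paper's computation in~(\ref{e519}).
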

\begin{proof}
See the proof of Corollary~\ref{c36}.
\end{proof}

\begin{remark} It is noteworthy that although conditions~(\ref{wc1}) and~(\ref{wc}) are equivalent, substituting condition~(\ref{wc1}) with condition~(\ref{wc}) in Theorem~\ref{wt2} under the constraint $\alpha + 2\delta < 1$ necessitates additional requirements: (i) $\Phi(a, b) \leqslant a + b$ for all $a, b \geqslant 0$; (ii) $\Phi$ is continuous at the domain of definition.

Indeed, let $x_0\in X$. Define $x_n = Tx_{n-1}$, i.e, $x_n=T^n x_0$ for $n = 1, 2, \ldots$. Similarly, in view of (\ref{wc}), we get
\begin{align*}
    d(x_{n},x_{n+{1}})=d(Tx_{n-1},Tx_{n}) &\leqslant  \alpha d(x_{n-1}, x_{n}) + \delta d(x_{n-1},Tx_{n})  \\
               &=  \alpha d(x_{n-1}, x_{n}) +\delta d(x_{n-1}, x_{n+1}).
\end{align*}
Hence, by the generalized triangle inequality~(\ref{tr}) and condition (i) we get
\begin{align*}
d(x_{n}, x_{n+1}) & \leqslant \alpha d(x_{n-1}, x_{n}) + \delta \Phi(d(x_{n-1}, x_{n}), d(x_{n}, x_{n+1})) \\
& \leqslant \alpha d(x_{n-1}, x_{n}) + \delta (d(x_{n-1}, x_{n}) + d(x_{n}, x_{n+1})).
\end{align*}
Consequently,
\[
d(x_{n},x_{n+1}) \leqslant \gamma d(x_{n-1},x_{n}),
\]
where $\gamma = \frac{\alpha+\delta}{{1}-\delta}$, $0\leqslant \gamma <1$. By Lemma~\ref{lem}, $(x_n)$ is a Cauchy sequence and  by the completeness of $(X,d)$, this sequence has a limit $x^*\in X$.

Let us prove that $Tx^*=x^*$. 
By the generalized triangle inequality~(\ref{tr}), the monotonicity of $\Phi$ and~(\ref{wc}), we get
$$
d(x^*,Tx^*)\leqslant \Phi(d(x^*,T^nx_{0}),d(T^nx_{0},Tx^*))
$$
$$
= \Phi(d(x^*,T^nx_{0}), \alpha d(T^{n-1}x_{0}, x^*) + \delta (d(T^{n-1}x_{0},Tx^*))).
$$
Letting $n\to \infty$, by the continuity of $\Phi$, we obtain
$$
d(x^*,Tx^*)\leqslant \Phi(0,\delta d(x^*,Tx^*)).
$$
By condition (i), we get
$$
d(x^*,Tx^*)\leqslant \delta d(x^*,Tx^*).
$$
Hence, since $\delta<\frac{1}{2}$ we obtain the equality  $d(x^*,Tx^*)=0$.
\end{remark}

\subsection{Bianchini's contractions in semimetric spaces}

In 1972, Bianchini \cite{Bi72} proved the following result, which extends the Kannan fixed point theorem \cite{Ka68}. Let $T\colon X\to X$ be a mapping on a complete metric space $(X,d)$ such that
  \begin{equation}\label{kk}
   d(Tx,Ty)\leqslant \beta \max\{d(x,Tx), d(y,Ty)\},
  \end{equation}
where $0\leqslant \beta<1$ and $x,y \in X$. Then $T$ has a unique fixed point.

\begin{theorem}\label{t7}
Let $(X,d)$ be a complete semimetric space with the continuous triangle function $\Phi$, satisfying conditions~(\ref{ee1}) and~(\ref{ee2}) and additionally to the condition:
\begin{itemize}
\item [(i)]  $\Phi(0,v)<1$ \, for all \,  $0\leqslant v< 1$.
\end{itemize}
Let  $T\colon X\to X$ satisfy inequality~(\ref{kk}) with $0\leqslant \beta<1$. Then $T$ has a unique fixed point.
\end{theorem}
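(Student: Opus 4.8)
The plan is to follow the now-familiar Picard iteration scheme used throughout the paper: fix $x_0 \in X$, set $x_n = T^n x_0$, show that consecutive distances contract geometrically, invoke Lemma~\ref{lem} to get a Cauchy sequence, pass to the limit $x^*$, and finally verify $Tx^* = x^*$ and uniqueness. The first step is routine: applying~(\ref{kk}) with $x = x_{n-1}$, $y = x_n$ gives
\[
d(x_n, x_{n+1}) = d(Tx_{n-1}, Tx_n) \leqslant \beta \max\{d(x_{n-1}, x_n), d(x_n, x_{n+1})\}.
\]
If the maximum on the right were $d(x_n, x_{n+1})$, then $d(x_n, x_{n+1}) \leqslant \beta d(x_n, x_{n+1})$ forces $d(x_n, x_{n+1}) = 0$ since $\beta < 1$, and the sequence is eventually constant (hence trivially Cauchy with limit a fixed point). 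Otherwise the maximum is $d(x_{n-1}, x_n)$, so $d(x_n, x_{n+1}) \leqslant \beta d(x_{n-1}, x_n)$, which is exactly hypothesis~(\ref{e31n0}) of Lemma~\ref{lem} with $\alpha = \beta$. Since $\Phi$ is continuous and satisfies~(\ref{ee1}) and~(\ref{ee2}), Lemma~\ref{lem} applies and $(x_n)$ is Cauchy; by completeness it converges to some $x^* \in X$.

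The core of the argument is showing $Tx^* = x^*$, and this is where condition~(i) enters. Using the generalized triangle inequality~(\ref{tr}), then~(\ref{kk}) with $x = x^*$, $y = x_{n-1}$ (so that $Ty = x_n$), and the monotonicity of $\Phi$:
\[
d(x^*, Tx^*) \leqslant \Phi\bigl(d(x^*, x_n), d(x_n, Tx^*)\bigr) = \Phi\bigl(d(x^*, x_n), d(Tx_{n-1}, Tx^*)\bigr),
\]
and $d(Tx_{n-1}, Tx^*) \leqslant \beta \max\{d(x_{n-1}, x_n), d(x^*, Tx^*)\}$. For large $n$ the term $d(x_{n-1}, x_n) \to 0$, so eventually this maximum equals $d(x^*, Tx^*)$ unless $d(x^*, Tx^*) = 0$ already. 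Feeding this into the display and letting $n \to \infty$, continuity of $\Phi$ gives
\[
d(x^*, Tx^*) \leqslant \Phi\bigl(0, \beta\, d(x^*, Tx^*)\bigr).
\]
Now apply~(\ref{ee1}) to pull the scalar out: writing $r = d(x^*, Tx^*)$, if $r > 0$ we may (after rescaling, or directly if $\beta r < 1$) reduce to the case where the second argument is $< 1$; more cleanly, $\Phi(0, \beta r) = \Phi(0 \cdot r, \beta \cdot r)$ — careful, $\Phi(0,\beta r)$ need not equal $r\,\Phi(0,\beta)$ unless we factor $r$ out, which~(\ref{ee1}) does allow: $\Phi(0, \beta r) = \Phi(r \cdot 0, r \cdot \beta) = r\,\Phi(0, \beta)$. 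Thus $r \leqslant r\,\Phi(0,\beta)$, and since $0 \leqslant \beta < 1$, condition~(i) yields $\Phi(0, \beta) < 1$, forcing $r = 0$, i.e. $Tx^* = x^*$. (One must handle separately the degenerate possibility that $d(x^*, Tx^*) \geqslant 1$, but the same computation applies verbatim since~(\ref{ee1}) holds for all scalars, so $\Phi(0, \beta r) = r\Phi(0,\beta) < r$ regardless.)

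For uniqueness, suppose $x$ and $y$ are fixed points, so $Tx = x$ and $Ty = y$. Then~(\ref{kk}) gives $d(x, y) = d(Tx, Ty) \leqslant \beta \max\{d(x, Tx), d(y, Ty)\} = \beta \max\{0, 0\} = 0$, hence $x = y$. The main obstacle is the passage to the limit in the fixed-point step: one must be careful that the $\max$ in~(\ref{kk}) behaves correctly as $n \to \infty$ — specifically, one cannot naively pass $\max$ through the limit inside $\Phi$ without first arguing that the "bad" alternative $d(x^*, Tx^*) = 0$ closes the proof immediately, and the "good" alternative leaves $\max\{d(x_{n-1},x_n), d(x^*,Tx^*)\} \to d(x^*, Tx^*)$. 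The cleanest route is a dichotomy: either $d(x^*,Tx^*)=0$ and we are done, or $d(x^*,Tx^*)>0$ and then for all sufficiently large $n$ the maximum equals $d(x^*,Tx^*)$, after which continuity of $\Phi$ and~(\ref{ee1}) together with~(i) deliver the contradiction.
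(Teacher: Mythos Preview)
Your proof is correct and follows essentially the same route as the paper: Picard iteration, the two-case analysis on the $\max$ to obtain $d(x_n,x_{n+1})\leqslant\beta\,d(x_{n-1},x_n)$, Lemma~\ref{lem} for the Cauchy property, and then the passage to the limit via~(\ref{tr}), continuity of $\Phi$, and~(\ref{ee1}) together with condition~(i) to conclude $d(x^*,Tx^*)=0$, with uniqueness from~(\ref{kk}) directly. Your dichotomy argument for handling the $\max$ inside the limit is in fact slightly more careful than the paper's presentation, but the substance is the same.
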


\begin{proof}
Let $x_0\in X$. Define $x_n = Tx_{n-1},$ i.e., $x_n=T^n x_0$ for $n = 1, 2, \ldots$. It follows straightforwardly that
$$
d(x_n,x_{n+1}) = d(Tx_{n-1},Tx_n)
$$
$$
\leqslant \beta \max\{d(x_{n-1},Tx_{n-1}), d(x_n,Tx_n)\} = \beta \max\{d(x_{n-1},x_n),  d(x_n,x_{n+1})\}.
$$

We now consider two cases:\\
Case 1:  $\max\{d(x_{n-1},x_n),  d(x_n,x_{n+1})\} = d(x_{n-1},x_n)$. Then we have
\begin{equation}\label{er1}
d(x_n,x_{n+1}) \leqslant \beta d(x_{n-1},x_n).
\end{equation}
Case 2:  $\max\{d(x_{n-1},x_n),  d(x_n,x_{n+1})\} = d(x_{n},x_{n+1})$. Then we get
\[
d(x_n,x_{n+1}) \leqslant \beta d(x_{n},x_{n+1})< d(x_n,x_{n+1}),
\]
which is a contradiction.

Hence, inequality~(\ref{er1}) holds for $0\leqslant \beta <1$. By Lemma~\ref{lem}, $(x_n)$ is a Cauchy sequence and  by the completeness of $(X,d)$, this sequence has a limit $x^*\in X$.

Let us prove that $Tx^*=x^*$. By the generalized triangle inequality~(\ref{tr}), the monotonicity of $\Phi$ and~(\ref{kk}), we get
$$
d(x^*,Tx^*)\leqslant \Phi(d(x^*,T^nx_{0}),d(T^nx_{0},Tx^*))
$$
$$
=\Phi(d(x^*,T^nx_{0}),\beta \max\{d(T^{n-1}x_{0},T^{n}x_{0}), d(x^*,Tx^*)\}).
$$
Letting $n\to \infty$, by the continuity of $\Phi$,  we obtain
$$
d(x^*,Tx^*)\leqslant \Phi(0,\beta d(x^*,Tx^*)).
$$
Using~(\ref{ee1}), we have
$$
d(x^*,Tx^*)\leqslant d(x^*,Tx^*)\Phi(0,\beta).
$$
By condition (i), we get $d(x^*,Tx^*)=0.$ Suppose that there exist two distinct fixed points $x$ and $y$. Then $Tx=x$ and $Ty=y$, which contradicts to~(\ref{kk}).
\end{proof}

\begin{corollary}
Theorem~\ref{t7} holds for semimetric spaces with the following triangle functions: $\Phi(u,v)=u+v$; $\Phi(u,v)=\max\{u,v\}$; $\Phi(u,v)=(u^q+v^q)^{\frac{1}{q}}$, $q>0$, and with the corresponding estimations from above for $d(x_n,x^*)$, see Corollary~\ref{c23}.
\end{corollary}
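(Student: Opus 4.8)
The plan is to verify, for each of the three triangle functions, that all hypotheses of Theorem~\ref{t7} are met, and then to invoke the theorem directly; the estimation for $d(x_n,x^*)$ will follow from Remark and inequality~(\ref{ee5}) exactly as in Corollary~\ref{c23}. The three functions are $\Phi(u,v)=u+v$, $\Phi(u,v)=\max\{u,v\}$, and $\Phi(u,v)=(u^q+v^q)^{1/q}$ with $q>0$. In all three cases $\Phi$ is symmetric, non-decreasing in each argument, satisfies $\Phi(0,0)=0$, and obeys the homogeneity condition~(\ref{ee1}); moreover condition~(\ref{ee2}) holds with the constants $C(\alpha)$ recorded just before Corollary~\ref{c23}, namely $1/(1-\alpha)$, $1$, and $1/(1-\alpha^q)^{1/q}$ respectively (these facts are quoted from~\cite{PSB24}).

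First I would check continuity of $\Phi$ on all of ${\RR}^+\times{\RR}^+$, not merely at $(0,0)$: for $\Phi(u,v)=u+v$ and $\Phi(u,v)=\max\{u,v\}$ this is immediate, and for $\Phi(u,v)=(u^q+v^q)^{1/q}$ with $q>0$ the map is continuous as a composition of continuous functions on ${\RR}^+\times{\RR}^+$ (with the usual convention at the boundary). Next I would verify the extra hypothesis~(i), $\Phi(0,v)<1$ for all $0\le v<1$. For the sum we have $\Phi(0,v)=v<1$; for the max we have $\Phi(0,v)=\max\{0,v\}=v<1$; and for the power function $\Phi(0,v)=(0^q+v^q)^{1/q}=v<1$. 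Thus in every case $\Phi(0,v)=v$, so condition~(i) holds trivially.

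Since Remark~\ref{r25} (via Example~\ref{ex1} and Theorem~\ref{t13}) guarantees that the semimetric $d$ is continuous for the power triangle function, and continuity of $d$ is classical for the metric and ultrametric cases, all hypotheses of Theorem~\ref{t7} are satisfied for each of the three triangle functions, yielding a unique fixed point $x^*$. Finally, applying the Remark preceding the section (inequality~(\ref{ee5})) with $\alpha=\beta$ — where $\beta$ is the Bianchini constant from~(\ref{kk}), for which inequality~(\ref{er1}) gives $d(x_n,x_{n+1})\le\beta\,d(x_{n-1},x_n)$ — we obtain $d(x_n,x^*)\le\beta^n C(\beta)\,d(x_0,x_1)$, which specializes to the three estimates displayed in Corollary~\ref{c23} with $\delta$ replaced by $\beta$.

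There is essentially no obstacle here: the corollary is a direct specialization, and the only point requiring any care is confirming continuity of $d$ (rather than just continuity of $\Phi$ at the origin) for the power triangle function, which is precisely the content of Theorem~\ref{t13} and is already available in the excerpt. I would therefore keep the proof to a single short paragraph mirroring the proof of Corollary~\ref{c23}, noting that (i) holds because $\Phi(0,v)=v$ in all three cases and that the estimates follow from~(\ref{ee5}) with the constants $C(\beta)$ listed above.
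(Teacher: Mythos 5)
Your proposal is correct and matches the paper's intent: the paper leaves this corollary without a written proof precisely because it is the routine verification you carried out — the three triangle functions satisfy~(\ref{ee1}), (\ref{ee2}) with the stated $C(\alpha)$, are continuous, satisfy condition (i) since $\Phi(0,v)=v$, and continuity of $d$ (classical for the first two, Theorem~\ref{t13} for the power function) lets you apply~(\ref{ee5}) with $\alpha=\beta$, exactly as in the proof of Corollary~\ref{c23}. Your observation that the estimates hold with $\delta$ replaced by the Bianchini constant $\beta$ is the right reading of ``corresponding estimations.''
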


\begin{remark}
Note that the triangle function $\Phi(u,v)=K(u+v)$ does not satisfy condition (i) of Theorem~\ref{t7} in the case $K>1$.
\end{remark}

\subsection{Chatterjea-Bianchini contractions in semimetric spaces}

In~\cite{Ch72} Chatterjea  proved the following result. Let $T\colon X\to X$ be a mapping on a complete metric space $(X,d)$ such that
  \begin{equation}\label{Ch}
   d(Tx,Ty)\leqslant \beta (d(x,Ty)+d(y,Tx)),
  \end{equation}
where $0\leqslant \beta<\frac{1}{2}$ and $x,y \in X$. Then $T$ has a unique fixed point. If we substitute condition (\ref{Ch}) with the following
\begin{equation}\label{gCh}
   d(Tx,Ty)\leqslant \beta \max\{d(x,Ty), d(y,Tx)\},  0\leq \beta<1,
\end{equation}
we refer to it as Chatterjea-Bianchini contraction, since this condition has features of both Chatterjea and Bianchini contractions. Note that condition~(\ref{gCh}) appeared earlier in the  well-known survey paper of Rhoades \cite{Rh77} under number (12).

To prove the following theorem, we require the notion of an inverse function for a non-increasing function. This is necessary because the theorem aims to encompass the class of ultrametric spaces, and the function $\Psi(u) = \max\{1,u\}$ is not strictly increasing. By~\cite[p.~34]{GRSY12} for every non-decreasing function $\Psi\colon [0,\infty]\to [0,\infty]$ the inverse function $\Psi^{-1}\colon [0,\infty]\to [0,\infty]$ can be well defined by setting
$$
\Psi^{-1}(\tau)=\inf\limits_{\Psi(t)\geqslant \tau} t.
$$

Here, $\inf$ is equal to $\infty$ if the set of $t \in[0,\infty]$ such that $\Psi(t)\geqslant \tau$ is empty. Note that the function  $\Psi^{-1}$ is non-decreasing too. It is evident immediately by the definition that
\begin{equation}\label{r1}
  \Psi^{-1}(\Psi(t))\leqslant t \, \text{ for all } \, t\in [0,\infty].
\end{equation}

The proof of the following theorem is similar to the fixed point theorem for Chatterjea contraction mappings \cite{Ch72} in semimetric spaces with triangle functions \cite{PSB24}. For completeness, we provide the proof of this theorem.

\begin{theorem}\label{t08}
Let $(X,d)$ be a complete semimetric space with the continuous triangle function $\Phi$ such that the semimetric $d$ is continuous.
Let  $T\colon X\to X$ satisfy inequality~(\ref{gCh}) with some $\beta\geqslant0$.
Let $\Phi$ also satisfy conditions~(\ref{ee1}) and~(\ref{ee2}) and additionally the following conditions hold:
\begin{itemize}
  \item [(i)] $\Phi(0,\beta)<1$,
  \item [(ii)] $\Psi^{-1}(1/\beta)> 1$ if $\beta>0$, where $\Psi(u)=\Phi(u,1)$.
\end{itemize}
Then $T$ has a fixed point. If $0\leqslant\beta<1$, then the fixed point is unique.
\end{theorem}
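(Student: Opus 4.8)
The plan is to follow the now-familiar two-stage scheme: first show that the Picard iterates $x_n = T^n x_0$ form a Cauchy sequence by establishing a contractive relation of the form $d(x_n, x_{n+1}) \leqslant \lambda\, d(x_{n-1}, x_n)$ with $\lambda \in [0,1)$, then invoke Lemma~\ref{lem} and completeness to get a limit $x^*$, and finally use continuity of $d$ and $\Phi$ to show $Tx^* = x^*$, closing with a uniqueness argument when $\beta < 1$.

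\emph{Step 1 (contraction on iterates).} Apply~(\ref{gCh}) to $x = x_{n-1}$, $y = x_n$ to obtain
$$
d(x_n, x_{n+1}) \leqslant \beta \max\{ d(x_{n-1}, x_{n+1}),\, d(x_n, x_n) \} = \beta\, d(x_{n-1}, x_{n+1}),
$$
since $d(x_n,x_n)=0$. By the generalized triangle inequality~(\ref{tr}), $d(x_{n-1},x_{n+1}) \leqslant \Phi(d(x_{n-1},x_n), d(x_n,x_{n+1}))$. If $\beta = 0$ we get $d(x_n,x_{n+1})=0$ and the sequence is eventually constant; assume $\beta>0$. Set $r = d(x_n,x_{n+1})$ and $s = d(x_{n-1},x_n)$; if $r > 0$ then from $r \leqslant \beta\,\Phi(s,r)$, monotonicity, and~(\ref{ee1}) (write $\Phi(s,r) = r\,\Phi(s/r, 1) = r\,\Psi(s/r)$ when $r>0$) we get $1/\beta \leqslant \Psi(s/r)$, hence $\Psi^{-1}(1/\beta) \leqslant s/r$ by~(\ref{r1}) and monotonicity of $\Psi^{-1}$. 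Thus $r \leqslant s / \Psi^{-1}(1/\beta)$, and condition (ii) gives $\lambda := 1/\Psi^{-1}(1/\beta) < 1$, so $d(x_n,x_{n+1}) \leqslant \lambda\, d(x_{n-1},x_n)$. This is the step I expect to be the main obstacle: correctly extracting a contraction constant from the implicit inequality $r \leqslant \beta\Phi(s,r)$ when $\Phi$ need not be strictly increasing (e.g. the ultrametric case $\Phi(u,v) = \max\{u,v\}$, where $\Psi(u) = \max\{u,1\}$ and $\Psi^{-1}(1/\beta) = 1/\beta$ for $\beta \leqslant 1$), which is precisely why the generalized inverse $\Psi^{-1}$ was set up beforehand; one must handle the $r = 0$ case separately and be careful that the argument is uniform in $n$.

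\emph{Step 2 (convergence).} With $d(x_n,x_{n+1}) \leqslant \lambda\, d(x_{n-1},x_n)$ for all $n \geqslant 1$ and $\lambda \in [0,1)$, Lemma~\ref{lem} applies (using that $\Phi$ satisfies~(\ref{ee1}) and~(\ref{ee2})), so $(x_n)$ is Cauchy; by completeness it converges to some $x^* \in X$.

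\emph{Step 3 (fixed point).} Using~(\ref{tr}) and then~(\ref{gCh}) with $x = T^{n-1}x_0$, $y = x^*$,
$$
d(x^*, Tx^*) \leqslant \Phi\big(d(x^*, T^n x_0),\, \beta \max\{ d(T^{n-1}x_0, Tx^*),\, d(x^*, T^n x_0)\}\big).
$$
Since $d$ is continuous, $d(x^*, T^n x_0) \to 0$ and $d(T^{n-1}x_0, Tx^*) \to d(x^*, Tx^*)$ as $n \to \infty$; passing to the limit using continuity of $\Phi$ and $\Phi(0,0)=0$ yields
$$
d(x^*, Tx^*) \leqslant \Phi(0, \beta\, d(x^*, Tx^*)).
$$
If $d(x^*,Tx^*) > 0$, apply~(\ref{ee1}) to write the right side as $d(x^*,Tx^*)\,\Phi(0,\beta)$, and condition (i) forces $d(x^*,Tx^*) < d(x^*,Tx^*)$, a contradiction; hence $Tx^* = x^*$.

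\emph{Step 4 (uniqueness for $\beta < 1$).} If $Tx = x$, $Ty = y$ with $x \neq y$, then~(\ref{gCh}) gives $d(x,y) \leqslant \beta \max\{d(x,y), d(y,x)\} = \beta\, d(x,y) < d(x,y)$, a contradiction. This completes the proof.
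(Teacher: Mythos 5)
Your proposal is correct and follows essentially the same route as the paper's proof: the same homogeneity trick $\Phi(s,r)=r\Psi(s/r)$ with the generalized inverse and~(\ref{r1}) to extract the contraction constant $\big(\Psi^{-1}(1/\beta)\big)^{-1}$, then Lemma~\ref{lem}, and the same limiting argument with conditions (i) and~(\ref{ee1}) for the fixed point and the standard uniqueness contradiction. The only cosmetic difference is that the paper disposes of $\beta=0$ and of the case $x_i=x_{i+1}$ up front, while you fold them into Step 1, which is equally fine.
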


\begin{proof}
Let $\beta=0$. Then~(\ref{gCh}) is equivalent to $d(Tx,Ty)=0$ for all $x,y \in X$. Let $x_0\in X$ and $x^*=Tx_0$, then $d(Tx_0,T(Tx_0))=0$ and $d(x^*,Tx^*)$=0. Hence, $x^*$ is a fixed point. Suppose there exists another fixed point $x^{**}\neq x^*$, $x^{**}=Tx^{**}$. Then, by the equality $d(Tx,Ty)=0$, we have $d(Tx^*,Tx^{**})=d(x^*,x^{**})=0$, which leads to a contradiction.\

Let $\beta>0$ and $x_0 \in X$ be given. Define $x_n =Tx_{n-1} $, i.e., $x_n =T^n x_0$ for $n = 1, 2, \ldots$. If $x_i=x_{i+1}$ for some $i$, then it is clear that $x_i$ is a fixed point. Suppose that $x_i\neq x_{i+1}$ for all $i$. It is evident that
\begin{align*}
    d(x_{n}, x_{n+1}) &= d(Tx_{n-1}, Tx_{n}) \leq \beta \max\{d(x_{n-1}, Tx_{n}), d(x_{n}, Tx_{n-1})\} \\
    &= \beta \max\{d(x_{n-1}, x_{n+1}), d(x_{n}, x_{n})\} \\
    &= \beta \max\{d(x_{n-1}, x_{n+1}), 0\} \\
    &= \beta d(x_{n-1}, x_{n+1}).
\end{align*}
Hence, by the generalized triangle inequality~(\ref{tr}) and condition~(\ref{ee1}), we obtain
$$
d(x_{n},\,x_{{n+1}})\leqslant \beta \Phi(d(x_{{n-1}},\,x_{{n}}), d(x_{{n}},\,x_{{n+1}}))
$$
and
\begin{equation}\label{w1}
\frac{1}{\beta}\leqslant \Phi\left(\frac{d(x_{{n-1}},\,x_{{n}})}{d(x_{n},\,x_{{n+1}})}, 1\right)=\Psi\left(\frac{d(x_{{n-1}},\,x_{{n}})}{d(x_{n},\,x_{{n+1}})}\right),
\end{equation}
where $\Psi(u)=\Phi(u,1)$, $u\in [0,\infty)$. It is clear that $\Psi(u)$ is non-decreasing on $[0,\infty)$. Consequently, $\Psi^{-1}(u)$ is also non-decreasing on $[0,\infty)$.

Hence, it follows from~(\ref{r1}) and~(\ref{w1}) that
$$
\Psi^{-1}\bigg(\frac{1}{\beta}\bigg)\leqslant \frac{d(x_{{n-1}},\,x_{{n}})}{d(x_{n},\,x_{{n+1}})}
$$
and
$$
d(x_{n},\,x_{{n+1}}) \leqslant \Big(\Psi^{-1}\Big({1}/{\beta}\Big)\Big)^{-1} {d(x_{{n-1}},\,x_{{n}})}.
$$
Consequently,
\[
d(x_{n},x_{n+1}) \leqslant \alpha d(x_{n-1},x_{n}),
\]
where $\alpha = \Big(\Psi^{-1}\Big({1}/{\beta}\Big)\Big)^{-1}$.
Since by condition (ii) $\Psi^{-1}(1/\beta)> 1$, we get  $0\leqslant \alpha <1$. By Lemma~\ref{lem}, $(x_n)$ is a Cauchy sequence and  by the completeness of $(X,d)$, this sequence has a limit $x^*\in X$. Let us prove that $Tx^*=x^*$.
By the generalized triangle inequality~(\ref{tr}), the monotonicity of $\Phi$ and~(\ref{gCh}), we get
$$
d(x^*,Tx^*)\leqslant \Phi(d(x^*,T^nx_{0}),d(T^nx_{0},Tx^*))
$$
$$
\leqslant \Phi(d(x^*,T^nx_{0}),\beta \max(d(T^{n-1}x_{0},Tx^*), d(x^*,T^nx_0))).
$$
Letting $n\to \infty$, by the continuity of $\Phi$ and $d$ we obtain
$$
d(x^*,Tx^*)\leqslant \Phi(0,\beta d(x^*,Tx^*)).
$$
Using~(\ref{ee1}), we have
$$
d(x^*,Tx^*)\leqslant d(x^*,Tx^*)\Phi(0,\beta).
$$
By condition (i), we get $d(x^*,Tx^*)=0$.

Suppose that there exist two distinct fixed points $x$ and $y$. Then $Tx=x$ and $Ty=y$, which contradicts to~(\ref{gCh}) in the case $0\leqslant \beta<1$.
\end{proof}


\begin{corollary}\label{c45}
Theorem~\ref{t08} holds in ultrametric spaces with the coefficient $0\leqslant \beta < 1$.
\end{corollary}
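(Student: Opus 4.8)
The plan is simply to check that the triangle function $\Phi(u,v)=\max\{u,v\}$ of an ultrametric space, together with the hypothesis $0\leqslant \beta<1$, meets every requirement of Theorem~\ref{t08}. First, $\Phi=\max$ is continuous on $\RR^+\times\RR^+$, and the semimetric $d$ of an ultrametric space is continuous; this is classical, but it also follows from Theorem~\ref{t13}, since for $x_n\to 0$ one has $|\max\{x_n,y_n\}-y_n|\leqslant x_n\to 0$, so condition~(\ref{q3}) holds. Condition~(\ref{ee1}) is immediate from $\max\{ku,kv\}=k\max\{u,v\}$ for $k\geqslant 0$, and condition~(\ref{ee2}) holds with $C(\alpha)=1$, because for $0\leqslant\alpha<1$ the nested expression in~(\ref{ee2}) equals $\max\{1,\alpha,\alpha^2,\ldots,\alpha^p\}=1$ (this was also recorded after Theorem~\ref{t2}).

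It then remains to verify conditions (i) and (ii) of Theorem~\ref{t08}. For (i), $\Phi(0,\beta)=\max\{0,\beta\}=\beta<1$ by assumption. For (ii), consider $\Psi(u)=\Phi(u,1)=\max\{u,1\}$ and compute its generalized inverse from the definition $\Psi^{-1}(\tau)=\inf\{t:\Psi(t)\geqslant\tau\}$: if $\tau\leqslant 1$ then $\max\{t,1\}\geqslant\tau$ for every $t\geqslant 0$, so $\Psi^{-1}(\tau)=0$; if $\tau>1$ then $\max\{t,1\}\geqslant\tau$ exactly when $t\geqslant\tau$, so $\Psi^{-1}(\tau)=\tau$. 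When $\beta>0$ we have $1/\beta>1$, hence $\Psi^{-1}(1/\beta)=1/\beta>1$, which is precisely condition (ii). Thus all hypotheses of Theorem~\ref{t08} are satisfied, so $T$ has a fixed point, and since $0\leqslant\beta<1$ this fixed point is unique; one may note in addition that the contraction constant produced in the proof is $\alpha=\bigl(\Psi^{-1}(1/\beta)\bigr)^{-1}=\beta$.

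There is no genuine obstacle here: the corollary is a direct specialization. The one point demanding a little care is the computation of $\Psi^{-1}$ for the non-strictly-monotone map $u\mapsto\max\{u,1\}$ — one must use the $\inf$-definition of the inverse rather than naive function inversion, and confirm that $\Psi^{-1}(1/\beta)$ is genuinely equal to $1/\beta$ (and strictly exceeds $1$) rather than some smaller value, which is exactly the reason condition (ii) was formulated via this generalized inverse in the first place.
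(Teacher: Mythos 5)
Your proof is correct and follows essentially the same route as the paper: specialize Theorem~\ref{t08} to $\Phi(u,v)=\max\{u,v\}$, compute the generalized inverse $\Psi^{-1}$ of $\Psi(u)=\max\{u,1\}$ from the $\inf$-definition (obtaining $0$ on $[0,1]$ and the identity on $(1,\infty)$), and conclude that conditions (i) and (ii) hold for $0\leqslant\beta<1$ (resp.\ $0<\beta<1$). The only difference is that you also spell out the routine verifications of~(\ref{ee1}), (\ref{ee2}) and the continuity of $\Phi$ and $d$, which the paper treats as already established earlier.
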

\begin{proof}
According to the assumption $\Phi(u,v)=\max\{u,v\}$, $\Psi(u)=\max\{u,1\}$ and
$$
\Psi^{-1}(u)=
  \begin{cases}
    0, & u\in[0,1], \\
    u, & u\in(1,\infty).
  \end{cases}
$$
Clearly, condition (i) holds for all $0\leqslant\beta<1$ and condition (ii) holds for all $0<\beta<1$.
\end{proof}

\begin{corollary}\label{c46}
Theorem~\ref{t08} holds for semimetric spaces with the following triangle functions $\Phi(u,v)=(u^q+v^q)^{\frac{1}{q}}$, $q>0$, and with the coefficient $0\leqslant\beta<2^{-1/q}$ in~(\ref{gCh}).
\end{corollary}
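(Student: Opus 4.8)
The plan is to verify directly that the power triangle function $\Phi(u,v)=(u^q+v^q)^{1/q}$, $q>0$, together with the restriction $0\leqslant\beta<2^{-1/q}$, fulfils every hypothesis of Theorem~\ref{t08}. First I would record the structural properties: $\Phi$ is symmetric, non-decreasing in each argument, satisfies $\Phi(0,0)=0$, and is continuous on $\RR^+\times\RR^+$ as a composition of continuous maps, while the continuity of the semimetric $d$ is furnished by Theorem~\ref{t13} (see also Example~\ref{ex1}). Homogeneity~(\ref{ee1}) is the one-line identity $\Phi(ku,kv)=(k^qu^q+k^qv^q)^{1/q}=k\Phi(u,v)$, and condition~(\ref{ee2}), with $C(\alpha)=1/(1-\alpha^q)^{1/q}$, was established in~\cite{PSB24} and is already quoted in the paragraph preceding Corollary~\ref{c23}. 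Thus only the two extra conditions (i) and (ii) of Theorem~\ref{t08} require attention.

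Condition (i) is immediate: $\Phi(0,\beta)=(0+\beta^q)^{1/q}=\beta$, and since $2^{-1/q}<1$ for every $q>0$, the hypothesis $\beta<2^{-1/q}$ already forces $\Phi(0,\beta)=\beta<1$. The same chain $\beta<2^{-1/q}<1$ shows $0\leqslant\beta<1$, which is exactly what is needed for the uniqueness assertion of Theorem~\ref{t08}.

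For condition (ii) I would compute $\Psi$ and its generalized inverse explicitly. Here $\Psi(u)=\Phi(u,1)=(u^q+1)^{1/q}$ is strictly increasing on $[0,\infty)$ with $\Psi(0)=1$ and range $[1,\infty)$; hence, from the definition $\Psi^{-1}(\tau)=\inf_{\Psi(t)\geqslant\tau}t$, one gets $\Psi^{-1}(\tau)=0$ for $0\leqslant\tau\leqslant1$ and $\Psi^{-1}(\tau)=(\tau^q-1)^{1/q}$ for $\tau>1$. When $\beta>0$ the bound $\beta<2^{-1/q}<1$ gives $1/\beta>1$, so $\Psi^{-1}(1/\beta)=(\beta^{-q}-1)^{1/q}$, and applying the strictly increasing maps $t\mapsto t^q$ and $t\mapsto t^{1/q}$ turns the required inequality $\Psi^{-1}(1/\beta)>1$ into $(\beta^{-q}-1)^{1/q}>1\iff\beta^{-q}>2\iff\beta^q<\tfrac12\iff\beta<2^{-1/q}$, which is precisely the hypothesis. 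With (i), (ii), (\ref{ee1}), (\ref{ee2}) and the continuity of $\Phi$ and of $d$ all verified, Theorem~\ref{t08} applies and yields the conclusion; moreover, from~(\ref{ee5}) one obtains the estimate $d(x_n,x^*)\leqslant\alpha^n(1-\alpha^q)^{-1/q}d(x_0,x_1)$ with $\alpha=(\beta^{-q}-1)^{-1/q}$, which could be added for completeness.

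The only step carrying any real content is the explicit computation of $\Psi^{-1}$ on its two pieces and the observation that the threshold $2^{-1/q}$ is sharp for making $\Psi^{-1}(1/\beta)>1$; everything else is routine verification. The point to be careful about is the generalized (non-strict) inverse convention, but since $\Psi$ happens to be strictly increasing here it coincides on $[1,\infty)$ with the ordinary inverse, so no subtlety actually arises.
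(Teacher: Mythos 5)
Your proposal is correct and follows essentially the same route as the paper: verify the continuity of $d$ via Example~\ref{ex1}, note that~(\ref{ee1}), (\ref{ee2}) and condition (i) hold, and compute $\Psi(u)=(u^q+1)^{1/q}$ with $\Psi^{-1}(\tau)=(\tau^q-1)^{1/q}$ for $\tau>1$, so that $\Psi^{-1}(1/\beta)>1$ is equivalent to $\beta<2^{-1/q}$. Your write-up is just a more detailed version of the paper's argument (including the piecewise form of the generalized inverse and the optional error estimate), with no substantive difference.
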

\begin{proof}
By Example~\ref{ex1} the semimetric $d$ is continuous. Further, we have  $\Psi(u)=(u^q+1)^{\frac{1}{q}}$ and $\Psi^{-1}(u)=(u^q-1)^{\frac{1}{q}}$. Clearly, condition (i) holds for all $0\leqslant\beta<1$ but condition (ii) holds if $0<\beta<2^{-1/q}$.
\end{proof}

\section{Conclusion}

This paper extends several known general fixed point theorems related to weak and partial contractions by modifying assumptions. Using Lemma~\ref{lem}, the study deepens the understanding of fixed point theorems and broadens their applicability beyond metric spaces. The findings suggest potential future research directions, including applying the approach to other contractive mappings under diverse conditions and categorizing various contractive definitions that can be proved using this technique. The significance of these generalized theorems extends across multiple disciplines, including optimization, mathematical modeling, and computer science. They may serve to establish stability conditions, demonstrate the existence of optimal solutions, and improve algorithm design.

\textbf{Acknowledgements.} This work was partially supported by a grant from the Simons Foundation (PD-Ukraine-00010584, E. Petrov). 

\bigskip

CONTACT INFORMATION

\medskip

Ravindra Kishor Bisht \\
Department of Mathematics, National Defence Academy, Khadakwasla, Pune, India\\
E-Mail: ravindra.bisht@yahoo.com

\medskip

Evgen Oleksandrovych Petrov \\
Institute of Applied Mathematics and Mechanics of the NAS of Ukraine, Sloviansk, Ukraine \\
E-Mail: eugeniy.petrov@gmail.com
\end{document}